\theoremstyle{plain}
\newtheorem{theorem}{Theorem}[section]
\newtheorem*{Theorem B}{Theorem B}
\newtheorem*{Theorem A}{Theorem A}
\newtheorem{lemma}{Lemma}[section]
\numberwithin{equation}{section}
\theoremstyle{remark}
\newtheorem{remark}{Remark}[section]
\title{Bi-warped product submanifolds of nearly Kaehler manifolds}
\author{Siraj Uddin}
\address{S. Uddin: Department of Mathematics, Faculty of Science, King Abdulaziz University, 21589 Jeddah, Saudi Arabia}
\email{siraj.ch@gmail.com}
\author{Bang-Yen Chen}
\address{B.-Y. Chen: Department of Mathematics, Michigan State University, 619 Red Cedar Road,   East Lansing, Michigan 48824--1027, U.S.A.}
\email{bychen@math.msu.edu}
\author{Awatif AL-Jedani}
\address{A. AL-Jedani: Department of Mathematics, Faculty of Science, King Abdulaziz University, 21589 Jeddah, Saudi Arabia}
\email{awtfmm@hotmail.com}
\author{Azeb Alghanemi}
\address{A. Alghanemi: Department of Mathematics, Faculty of Science, King Abdulaziz University, 21589 Jeddah, Saudi Arabia}
\email{aalghanemi@kau.edu.sa}
\subjclass[2010]{53C15, 53C40, 53C42, 53B25}
\keywords{Warped product; bi-warped product; slant submanifold; totally real submanifold; nearly Kaehler manifold; semi-slant warped product submanifold}
\begin{document}
\begin{abstract} 
We study bi-warped product submanifolds of nearly Kaehler manifolds which are the natural extension of warped products. We prove that every bi-warped product submanifold of the form $M=M_T\times_{f_1}\! M_\perp\times_{f_2}\! M_\theta$ in a nearly Kaehler manifold satisfies the following sharp inequality: $$\|h\|^2\geq 2p\|\nabla (\ln f_1)\|^2+4q\left(1+{\small \frac{10}{9}}\cot^2\theta\right)\|\nabla(\ln f_2)\|^2,$$ where $p=\dim M_\perp$, $q=\frac{1}{2}\dim M_\theta$, and $f_1,\,f_2$ are smooth positive functions on $M_T$. We also investigate the equality case of this inequality. Further,  some applications of this inequality are also given.
\end{abstract}

%%%%%%%%%%%%%%%%%%%%%%%%%%%%%%%%%%%%%%%%%%%%%%%%%%%%%%%%%%%%%%%%%%%%%%%%%%%%%%%%%%%%%%%%%%%%%%%%%%%%%%%%%%
\maketitle

%%%%%%%%%%%%%%%%%%%%%%%%%%%%%%%%%%%%%%%%%%%%%%%%%%%%%%%%%%%%%%%%%%%%%%%%%%%%%%%%%%%%%%%%%%%%%%%%%%%%%%%%%%%%%%%%%
\sloppy
\section{Introduction}
Bi-warped product manifolds are natural extensions of (ordinary) warped product and Riemannian product manifolds. 
Let $M_0,M_1$ and $ M_2$ be Riemannian manifolds and $M=M_0\times M_1\times M_2$ be the Cartesian product of $M_0,\,M_1$ and $M_2$. For each $i=0,1,2$,  we denote by $\pi_i: M\to M_i$ the canonical projection of $M$ onto $M_i$. For each $\pi_i: M\to M_i$, let $\pi_{i^*}$ denote the corresponding tangent map $\pi_{i^*}: TM\to T M_i$. Denote by $\Gamma(TM)$ the Lie algebra of vector fields of $M$.

If $f_1,\,f_2$ are positive real valued functions on $M_0$, then 
\begin{align*}
&g(X, Y)=g(\pi_{0*}X, \pi_{0*}Y)+\left(f_1\circ\pi_1\right)^2g(\pi_{1*}X, \pi_{1*}Y)+\left(f_2\circ\pi_2\right)^2g(\pi_{2*}X, \pi_{2*}Y),
\\& \hskip1.6in  X,Y\in \Gamma(TM),
\end{align*}
defines a Riemannian metric on $M_0\times M_1\times M_2$, called a \textit{bi-warped product metric}. The product manifold $M=M_0\times M_1\times M_2$ endowed with this warped product metric $g$, denoted by $M_1\times_{f_1}\! M_2\times_{f_2}\! M_3$, is called a {\it bi-warped product manifold}. The functions $f_1,\,f_2$ are  called the {\it warping functions}. 
Obviously, if  $f_1,\,f_2$ are both constant, $M$ is simply a Riemannian product; and if exactly one of $f_1,f_2$ is constant, then $M$ is an (ordinary) warped product manifold. Further, if none of $f_1,f_2$ is constant, then $M$ is called a {\it proper bi-warped product manifold}.

Let $M=M_0\times_{f_1}M_1\times_{f_2}M_2$ be a bi-warped product submanifold.  We put 
$${\mathfrak{D}}=TM_T,\;\; {\mathfrak{D}}^\perp=TM_\perp,\;\; {\mathfrak{D}}^\theta=TM_\theta,\;\; N= {}_{f_1}M_1\times_{f_2}M_2.$$
Then we have  (cf. \cite{CD} and \cite{U6})
\begin{align}\label{warped}
\nabla_XZ=\sum_{i=1}^2\left(X(\ln f_i)\right)Z^i,
\end{align}
for  $X\in {\mathfrak{D}}_0$ and $Z\in\Gamma(TN)$, where $\nabla$ is the Levi-Civita connection on $M$ and $Z^i$ (i=1,2) is the $M_i$-component of $Z$.

Nearly Kaehler manifolds, also known as  almost Tachibana manifolds, were first studied in 1959 by S. Tachibana  \cite{Tach59} and then in 1970 by A. Gray \cite{Gr70}.
Obviously, Kaehler manifolds are nearly Kaehler, but the converse is not true. Non-Kaehlerian nearly Kaehler manifolds are called {\it strict nearly Kaehler manifolds}.

The best known example of a strict nearly Kaehler manifold is the unit 6-sphere $S^6$. 
More general examples are  homogeneous spaces $G/K$, where $G$ is a compact semisimple Lie group and $K$ is the fixed point set of an automorphism of $G$ of order 3 (cf. \cite{WG68}).  In 1985,  T. Friedrich and R. Grunewald proved in \cite{FG85} that a Riemannian 6-manifold is nearly Kaehler if and only if admits a Riemannian Killing spinor. After then, strict nearly Kaehler manifolds obtained a lot of attentions due to their relation to Killing spinors.

The notion of warped products plays very important roles not only in geometry but also in mathematical physics, especially in general relativity. The term of ``warped product'' was introduced by R. L. Bishop and B. O'Neill in \cite{Bi69}, who used it to construct a large class of complete manifolds of negative curvature. Inspired by Bishop and O'Neill's article,  many important works on warped products from {\it intrinsic} point of view were done during the last fifty years.

On the other hand, the study of warped product submanifolds from {\it extrinsic} point of review was initiated around the beginning  of this century  in \cite{C3,C4,C5}. Since then warped product submanifolds have became an active research subject (see, e.g., \cite{C6,book17,C7,CD,CU,sahin06,sahin13,SC}). For instance, B. Sahin  studied in \cite{sahin13} warped product pointwise semi-slant submanifolds in Kaehler manifolds. H. M. Tastan \cite{Tas17} extended this study to bi-warped product submanifolds in Kaehler manifolds by considering that one of the fiber of warped product is a pointwise slant submanifold.

In this article, we study bi-warped product submanifolds in nearly Kaehler manifolds. In section 2, we give basic definitions and formulas. In section 3, we prove some useful results for the proof of our main result. In section 4, we prove a sharp inequality for bi-warped product submanifolds in nearly Kaehler manifolds. We also discuss the equality case of the inequality. In the last section, we provide some applications of our main result.

%%%%%%%%%%%%%%%%%%%%%%%%%%%%%%%%%%%%%%%%%%%%%%%%%%%%%%%%%%%%%%%%%%%%%%%%%%%%%%%%%%%%%%%%%%%%%%%%%%%%%%%%%%%%%%%%%%%%%%%%

\section{Preliminaries}
An even-dimensional differentiable manifold $N_K$ with Riemannian metric $g$ and almost complex structure $J$ is called a {\it nearly Kaehler manifold} if  (cf. \cite{book11,Gr70})
\begin{equation}\begin{aligned}
 \label{metric}
&g(JX,JY)=g({X}, {Y}),\,\,\; (\tilde\nabla_XJ)Y+(\tilde\nabla_YJ)X=0,
\end{aligned}\end{equation}
for any vector fields $X, Y\in \Gamma(TN_K)$. 

Let $M$ be a submanifold of a Riemannian manifold  $\tilde M$ with induced metric $g$. Let  $\Gamma(T^\perp M)$ denote the set of all vector fields normal to $M$. Then the Gauss and Weingarten formulas are given respectively by (see, for instance, \cite{C3,book17})
\begin{align}
 \label{gauss} &\tilde\nabla_XY=\nabla_XY+h(X, Y),
\\& \label{wen}\tilde\nabla_X\xi=-A_{\xi} X+\nabla^{\perp}_X\xi,
\end{align}
 for  vector fields $X,~Y\in\Gamma(TM)$ and $\xi \in\Gamma(T^{\perp}M)$, where $\nabla$ and $\nabla^{\perp}$ denote the induced connections on the tangent and normal bundles of $M$, respectively, and $h$ is the second fundamental form, $A$ is the shape operator of the submanifold. The second fundamental form $h$ and the shape operator $A$ are related by 
\begin{align}
 \label{wen1}
g(h(X, Y), N)=g(A_NX, Y).
\end{align}

For an $n$-dimensional submanifold $M$ of an almost Hermitian $2m$-manifold $\tilde M$, we choose a local orthonormal frame field $\{e_1,\cdots, e_n, e_{n+1},\cdots, e_{2m}\}$ such that, restricted to $M$,  $e_1,\cdots, e_n$ are tangent to $M$ and $e_{n+1},\cdots, e_{2m}$ are normal to $M$. 

Let $\{h^r_{ij}\},\,1\leq i, j\leq n;\,n+1\leq r\leq 2m,$ denote the coefficients of the second fundamental form $h$ with respect to the local frame field. Then, we have
\begin{equation}\begin{aligned}
& \label{funda}
h_{ij}^r=g(h(e_i, e_j), e_r)=g(A_{e_r}e_i, e_j),\,\,\,\\& \|h\|^2=\sum_{i,j=1}^ng(h(e_i, e_j), h(e_i, e_j)).
\end{aligned}\end{equation}

For any $X\in\Gamma(TM)$, we put
\begin{align}
 \label{tan}
JX = T X +FX,
\end{align}
where $TX$ and $FX$ are the tangential and normal components of $JX$, respectively.
 A submanifold $M$ of an almost Hermitian manifold $\tilde M$ is said to be {\it {holomorphic}} (resp. {\it {totally real}}) if $J(T_pM)=T_pM$ (resp. $J(T_pM)\subseteq T_p{^\perp}M)\,\forall~p\in M$.

There are other important classes of submanifolds determined by the behaviour of almost complex structure $J$ acting on the tangent space of $M$:
 For a nonzero vector $X\in T_pM$, $p\in M$, the angle $\theta(X)$ between $JX$ and $T_pM$ is called the Wirtinger angle of $X$. A submanifold $M$ is said to be {\it slant} (cf. \cite{C1,C2}) if the Wirtinger angle $\theta(X)$ is constant on $M$, i.e., it is independent of the choice of $X\in T_pM$ and $p\in M$. In this case, $\theta$ is called the {\it slant angle} of $M$. Holomorphic and totally real submanifolds are slant submanifolds with slant angles $0$ and $\frac{\pi}{2}$, respectively. A slant submanifold is called {\it proper slant} if it is neither holomorphic nor totally real.
  More generally, a distribution ${\mathfrak{D}}$ on $M$ is called a {\it slant distribution} if the angle $\theta(X)$ between $JX$ and ${\mathfrak{D}}_p$ is independent of the choice of  $p\in M$ and of $0\ne X\in{\mathfrak{D}}_p$. 

It is well-known from \cite{C1} that a submanifold $M$ of an almost Hermitian manifold $\tilde M$ is slant if and only if we have
\begin{align}
 \label{slant}
T^2X = -(\cos^2\theta)X,\;\; X\in\Gamma(TM).
\end{align}
From \eqref{slant} we have the following.
\begin{align}
 \label{slant1}
g(TX,TY)=(\cos^2\theta) g(X,Y),
\end{align}
\begin{align}
 \label{slant2}
g(FX, FY) = (\sin^2\theta) g(X, Y),
\end{align}
for any vector fields $X,Y$ tangent to $M$.

%%%%%%%%%%%%%%%%%%%%%%%%%%%%%%%%%%%%%%%%%%%%%%%%%%%%%%%%%%%%%%%%%%%%%%%%%%%%%%%%%%%%%%%%%%%%%%%%%%%%%%%%%%%%%%%%%%%%%%%%

\section{Bi-warped product submanifolds }

Now, we study bi-warped product submanifolds  in a nearly Kaehler manifold $\tilde M$ which are of the form $M=M_T\times_{f_1}\! M_\perp\times_{f_2}\! M_\theta$, where $M_T,\, M_\perp,M_\theta$ are holomorphic, totally real and proper slant submanifolds of $\tilde M$, respectively. 
If we put 
$${\mathfrak{D}}=TM_T,\;\; {\mathfrak{D}}^\perp=TM_\perp,\;\; {\mathfrak{D}}^\theta=T M_\theta,$$
then the tangent and normal bundles of $M$ are decomposed as
\begin{align*}
TM={\mathfrak{D}}\oplus{\mathfrak{D}}^\perp\oplus{\mathfrak{D}}^\theta,\,\,\,T^\perp M=J{\mathfrak{D}}^\perp\oplus F{\mathfrak{D}}^\theta\oplus\mu
\end{align*}
where $\mu$ is an $j$-invariant normal subbundle of the normal bundle $T^\perp M$.
From now one, we use the following conventions:  $X_1, Y_1,\ldots $ are vector fields in $\Gamma({\mathfrak{D}})$ and $X_2, Y_2,\ldots $ are vector fields in $\Gamma({\mathfrak{D}}^\theta)$, while  $Z, W,\ldots $ are vector fields in $\Gamma({\mathfrak{D}}^\perp)$.
\vskip.1in

We present the following useful results for later use.

\begin{lemma}\label{L1} Let $M=M_T\times_{f_1}\! M_\perp\times_{f_2}\! M_\theta$ be a bi-warped product submanifold of a nearly Kaehler manifold $\tilde M$. Then we have
\begin{enumerate}
\item[(i)] $g(h(X_1, Y_1), JZ)=0,$
\item [(ii)]  $g(h(X_1, Y_1), FX_2)=0,$
\item [(iii)] $g(h(X_1, Z), JW)=-JX_1(\ln f_1)\,g(Z, W),$
\end{enumerate}
for any $X_1,Y_1\in \Gamma({\mathfrak{D}})$, $Z,W\in \Gamma({\mathfrak{D}}^\perp)$ and $X_2\in \Gamma({\mathfrak{D}}^\theta)$.
\end{lemma}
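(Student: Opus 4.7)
The plan is to convert each left-hand side into an ambient covariant-derivative expression using the Gauss/Weingarten formulas \eqref{gauss}--\eqref{wen}, move the almost complex structure $J$ across using $g(JA,B)=-g(A,JB)$, and then bring in the nearly Kaehler condition $(\tilde\nabla_XJ)Y+(\tilde\nabla_YJ)X=0$ together with the bi-warped product connection formula \eqref{warped}. A recurring auxiliary identity I would use throughout is $g((\tilde\nabla_XJ)Y,Z)+g(Y,(\tilde\nabla_XJ)Z)=0$, obtained by differentiating $g(JY,Z)+g(Y,JZ)=0$ along $X$.

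For (i), I start from $g(h(X_1,Y_1),JZ)=g(\tilde\nabla_{X_1}Y_1,JZ)$ (the $\nabla_{X_1}Y_1$ piece drops since $JZ$ is normal), apply metric compatibility to get $-g(Y_1,\tilde\nabla_{X_1}JZ)$, and expand $\tilde\nabla_{X_1}JZ=(\tilde\nabla_{X_1}J)Z+J\tilde\nabla_{X_1}Z$. Since $JY_1\in\Gamma(\mathfrak{D})$ while $\tilde\nabla_{X_1}Z=X_1(\ln f_1)Z+h(X_1,Z)$ lies in $\Gamma(\mathfrak{D}^\perp)\oplus\Gamma(T^\perp M)$, the piece $-g(Y_1,J\tilde\nabla_{X_1}Z)=g(JY_1,\tilde\nabla_{X_1}Z)$ vanishes. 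What remains is $g((\tilde\nabla_{X_1}J)Y_1,Z)$; this is antisymmetric in $X_1,Y_1$ by nearly Kaehler, so since $h$ is symmetric both sides vanish. Part (ii) follows the same pattern after writing $FX_2=JX_2-TX_2$ and noting $g(\tilde\nabla_{X_1}Y_1,TX_2)=g(\nabla_{X_1}Y_1,TX_2)=0$, because $M_T$ is totally geodesic in $M$ so $\nabla_{X_1}Y_1\in\Gamma(\mathfrak{D})$ is orthogonal to $TX_2\in\Gamma(\mathfrak{D}^\theta)$.

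Part (iii) is more delicate: the warped factor $M_\perp$ is a fibre, so $\nabla_ZW$ has a horizontal $TM_T$-component equal to $-g(Z,W)\nabla(\ln f_1)$ (a consequence of \eqref{warped} via metric compatibility and the vanishing of horizontal-vertical Lie brackets), and this is exactly what will produce the target term. Set $B(Z,W):=g(h(X_1,Z),JW)$. I would compute $B(Z,W)$ by two different routes. \emph{Route A}, starting from $g(\tilde\nabla_{X_1}Z,JW)=-g(Z,\tilde\nabla_{X_1}JW)$, yields $B(Z,W)=B(W,Z)-g(Z,(\tilde\nabla_{X_1}J)W)$ after expanding $\tilde\nabla_{X_1}JW$ and using $\tilde\nabla_{X_1}W=X_1(\ln f_1)W+h(X_1,W)$. \emph{Route B}, starting from $g(\tilde\nabla_ZX_1,JW)=-g(X_1,\tilde\nabla_ZJW)$, produces $g(JX_1,\tilde\nabla_ZW)=JX_1(\ln f_1)\,g(Z,W)$ from the horizontal component above, and after invoking nearly Kaehler and the auxiliary identity collapses to $B(Z,W)=g(Z,(\tilde\nabla_{X_1}J)W)-JX_1(\ln f_1)\,g(Z,W)$. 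Equating the two routes and swapping $Z\leftrightarrow W$ yields $3g(Z,(\tilde\nabla_{X_1}J)W)=0$, so the nearly Kaehler term drops out and Route B becomes exactly (iii).

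The principal obstacle is in (iii): a single application of the $h$-symmetrization that worked for (i) and (ii) does \emph{not} eliminate the $(\tilde\nabla_XJ)$-contribution, because the warped-product horizontal term genuinely enters on one side. The two-route computation combined with the $Z\leftrightarrow W$ swap is the essential mechanism, simultaneously isolating the desired warping term $-JX_1(\ln f_1)g(Z,W)$ and forcing the residual nearly Kaehler expression $g(Z,(\tilde\nabla_{X_1}J)W)$ to vanish on its own.
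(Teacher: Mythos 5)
Your proposal is correct and takes essentially the same route as the paper: parts (i)--(ii) by reducing the left-hand side to $g((\tilde\nabla_{X_1}J)Y_1,\cdot)$ and symmetrizing over $X_1,Y_1$ via the nearly Kaehler condition, and part (iii) by the same two expansions of $g(h(X_1,Z),JW)$ (your Routes A and B are exactly the ingredients the paper merges into its relation \eqref{3.3}) followed by the same $Z\leftrightarrow W$ swap. The only blemish is a sign slip in your intermediate Route B prose, which should read $g(JX_1,\tilde\nabla_ZW)=-JX_1(\ln f_1)\,g(Z,W)$; your stated horizontal-component formula and your final Route B identity already carry the correct sign, so the conclusion stands.
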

\begin{proof}
For any $X_1,Y_1\in \Gamma({\mathfrak{D}})$ and $Z\in \Gamma({\mathfrak{D}}^\perp)$, we have
\begin{align*}
g(h(X_1, Y_1), JZ)=g(\tilde\nabla_{X_1}Y_1, JZ)=g((\tilde\nabla_{X_1}J)Y_1, Z)-g(\tilde\nabla_{X_1}\!JY_1, Z).
\end{align*}
Using \eqref{warped}, we find
\begin{align*}
g(h(X_1, Y_1), JZ)=g((\tilde\nabla_{X_1}\! J)Y_1, Z)+X_1(\ln f_1)g(JY_1, Z).
\end{align*}
By the orthogonality of vector fields, we have
\begin{align}\label{3.1}
g(h(X_1, Y_1), JZ)=g((\tilde\nabla_{X_1}\! J)Y_1, Z).
\end{align}
Interchanging $X_1$ by $Y_1$ in \eqref{3.1}, we get
\begin{align}\label{3.2}
g(h(X_1, Y_1), JZ)=g((\tilde\nabla_{Y_1}\! J)X_1, Z).
\end{align}
Then, first part follows from \eqref{3.1} and \eqref{3.2} by using \eqref{metric}. In a similar fashion, we can prove (ii). For the third part, we have
\begin{align*}
g(h(X_1, Z), JW)=g(\tilde\nabla_{Z}X_1, JW)=g((\tilde\nabla_{Z} J)X_1, W)-g(\tilde\nabla_{Z}\! JX_1, W),
\end{align*}
for any $X_1\in \Gamma({\mathfrak{D}})$ and $Z, W\in \Gamma({\mathfrak{D}}^\perp)$. Again, using \eqref{warped} and \eqref{metric}, we derive
\begin{align*}
g(h(X_1, Z), JW)&=-g((\tilde\nabla_{X_1}\! J)Z, W)-JX_1(\ln f_1)g(Z, W)\\
&=-g(\tilde\nabla_{X_1}\! JZ, W)+g(J\tilde\nabla_{X_1}Z, W)-JX_1(\ln f_1)g(Z, W).
\end{align*}
Using \eqref{metric}, \eqref{gauss}, \eqref{wen} and \eqref{wen1}, we get
\begin{align}\label{3.3}
2g(h(X_1, Z), JW)=g(h(X_1, W), JZ)-JX_1(\ln f_1)g(Z, W).
\end{align}
Interchanging $Z$ by $W$ in \eqref{3.3}, we obtain
\begin{align}\label{3.4}
2g(h(X_1, W), JZ)=g(h(X_1, Z), JW)-JX_1(\ln f_1)g(Z, W).
\end{align}
Hence, the third part follows from \eqref{3.3} and \eqref{3.4}, which proves the lemma.
 \end{proof}
 
A bi-warped product submanifold $M=M_T\times_{f_1}\! M_\perp\times_{f_2}\! M_\theta$ in a nearly Kaehler manifold $\tilde M$ is said to be {\it ${\mathfrak{D}}\oplus{\mathfrak{D}}^\perp$--mixed totally geodesic} (resp., {\it ${\mathfrak{D}}\oplus{\mathfrak{D}}^\theta$--mixed totally geodesic}) if its second fundamental $h$ satisfies
 \begin{align}\notag &\hskip.3in  h(X_1,Z)=0\;\; \; \forall X_1\in \Gamma({\mathfrak{D}}), \;\; \forall Z\in \Gamma({\mathfrak{D}^\perp})
 \\&\notag (resp.,\;\; h(X_1,X_2)=0\;\; \; \forall X_1\in \Gamma({\mathfrak{D}}), \;\; \forall X_2\in \Gamma({\mathfrak{D}^\theta}))
 .\end{align}
 
 \begin{lemma}\label{L2} Let $M=M_T\times_{f_1}\! M_\perp\times_{f_2}\! M_\theta$ be a bi-warped product submanifold of a nearly Kaehler manifold $\tilde M$. Then we have
\begin{enumerate}
\item [(i)]  $g(h(X_1, Z), FX_2)=\frac{1}{2}\,g(h(X_1, X_2), JZ)=0,$
\item [(ii)] $g(h(X_1, X_2), FY_2)=\frac{1}{3}X_1(\ln f_2)\,g(TX_2, Y_2)-JX_1(\ln f_2)g(X_2, Y_2),$
\end{enumerate}
for any $X_1\in \Gamma({\mathfrak{D}})$, $Z\in \Gamma({\mathfrak{D}}^\perp)$ and $X_2,Y_2\in \Gamma({\mathfrak{D}}^\theta)$.
\end{lemma}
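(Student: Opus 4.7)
The strategy mirrors Lemma~\ref{L1}: rewrite each quantity $g(h(\cdot,\cdot),\xi)$ via Gauss as a $\tilde\nabla$-expression in $\tilde M$, slide $J$ across the metric using $g(JU,V)=-g(U,JV)$, apply the nearly Kaehler identity $(\tilde\nabla_A J)B=-(\tilde\nabla_B J)A$ to transfer derivatives between arguments, and exploit the bi-warped formula \eqref{warped} together with the orthogonal decompositions $TM=\mathfrak{D}\oplus\mathfrak{D}^\perp\oplus\mathfrak{D}^\theta$ and $T^\perp M=J\mathfrak{D}^\perp\oplus F\mathfrak{D}^\theta\oplus\mu$ to discard the surplus terms. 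The useful auxiliary fact is that $T$ preserves $\mathfrak{D}^\theta$ and is skew-adjoint on it.

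For (i), I would start from $g(h(X_1,Z),FX_2)=g(\tilde\nabla_Z X_1,FX_2)$ and write $FX_2=JX_2-TX_2$. The $TX_2$-piece dies because \eqref{warped} gives $\nabla_Z X_1=X_1(\ln f_1)Z\in\mathfrak{D}^\perp$, which is orthogonal to $\mathfrak{D}^\theta$. Transferring $J$ and invoking the nearly Kaehler identity to pass from $(\tilde\nabla_Z J)X_1$ to $-(\tilde\nabla_{X_1}J)Z$, then expanding by \eqref{warped} and Weingarten on the normal section $JZ\in J\mathfrak{D}^\perp$, yields
\begin{equation*}
2\,g(h(X_1,Z),FX_2)=g(h(X_1,X_2),JZ).
\end{equation*}
The dual computation starting from $g(h(X_1,X_2),JZ)=-g(J\tilde\nabla_{X_2}X_1,Z)$ and swapping $X_2$ with $X_1$ through nearly Kaehler produces the companion relation
\begin{equation*}
2\,g(h(X_1,X_2),JZ)=g(h(X_1,Z),FX_2),
\end{equation*}
and the resulting $2\times 2$ linear system forces both sides to vanish, giving (i).

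For (ii), the same template applied to $g(h(X_1,X_2),FY_2)=g(\tilde\nabla_{X_1}X_2,FY_2)$ no longer yields complete cancellation: the $TY_2$-piece of $FY_2$ contributes $X_1(\ln f_2)\,g(TX_2,Y_2)$ via \eqref{warped}, the intermediate term $\nabla_{X_2}JX_1=JX_1(\ln f_2)X_2$ feeds in $JX_1(\ln f_2)\,g(X_2,Y_2)$, and the expansion of $J\tilde\nabla_{X_2}X_1$ re-introduces $g(h(X_1,X_2),FY_2)$ on the right. Careful bookkeeping produces
\begin{equation*}
2\,g(h(X_1,X_2),FY_2)=g(h(X_1,Y_2),FX_2)+X_1(\ln f_2)\,g(TX_2,Y_2)-JX_1(\ln f_2)\,g(X_2,Y_2).
\end{equation*}
Interchanging $X_2\leftrightarrow Y_2$ and using the skew-adjointness of $T$ (so $g(TX_2,Y_2)$ flips sign while $g(X_2,Y_2)$ does not) gives the companion equation, and solving the $2\times 2$ system extracts the claimed identity with the coefficient $\tfrac13$ in front of the $T$-term. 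The main obstacle is not conceptual but purely bookkeeping: one must reliably track the handful of terms where \eqref{warped} activates and, crucially, respect the skew/symmetric nature of $g(T\cdot,\cdot)$ versus $g(\cdot,\cdot)$ at the swap step, since a single sign error there converts the $\tfrac13$ into a wrong rational.
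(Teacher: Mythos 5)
Your proposal is correct and follows essentially the same route as the paper: Gauss/Weingarten plus the nearly Kaehler identity to trade $(\tilde\nabla_Z J)X_1$ for $-(\tilde\nabla_{X_1}J)Z$, the bi-warped formula \eqref{warped} to evaluate the surviving tangential terms, and then the pair of "swapped" relations (your two displayed equations are exactly the paper's \eqref{3.5}--\eqref{3.6} and \eqref{3.7}--\eqref{3.8}, up to rewriting $-g(X_2,TY_2)=g(TX_2,Y_2)$) solved as a $2\times 2$ linear system. The sign handling at the $X_2\leftrightarrow Y_2$ swap via skew-adjointness of $T$ is also exactly how the paper obtains the coefficient $\tfrac13$.
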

 \begin{proof} For any $X_1\in \Gamma({\mathfrak{D}}),\,\,Z\in \Gamma({\mathfrak{D}}^\perp)$, and $X_2\in \Gamma({\mathfrak{D}}^\theta)$, we have
 \begin{align*}
g(h(X_1, Z), FX_2)&=g(\tilde\nabla_{Z}X_1, JX_2-TX_2)\\
&=g((\tilde\nabla_{Z}J)X_1, X_2)-g(\tilde\nabla_{Z}JX_1, X_2)-g(\tilde\nabla_{Z}X_1, TX_2).
\end{align*}
Using \eqref{metric}, \eqref{warped} and the orthogonality of vector fields, we derive
 \begin{align*}
g(h(X_1, Z), FX_2)=-g((\tilde\nabla_{X_1}J)Z, X_2)=-g(\tilde\nabla_{X_1}JZ, X_2)+g(J\tilde\nabla_{X_1}Z, X_2).
\end{align*}
Then, from \eqref{metric}-\eqref{wen1}, we obtain
\begin{align}\label{3.5}
g(h(X_1, Z), FX_2)=\frac{1}{2}\,g(h(X_1, X_2), JZ),
\end{align}
Which is the first equality of (i). 

On the other hand, we have
\begin{align*}
g(h(X_1, X_2), JZ)=g(\tilde\nabla_{X_2}X_1, JZ)=g((\tilde\nabla_{X_2}J)X_1, Z)-g(\tilde\nabla_{X_2}JX_1, Z).
\end{align*}
Using \eqref{metric}, \eqref{warped} and the orthogonality of vector fields, we find
\begin{align*}
g(h(X_1, X_2), JZ)=-g((\tilde\nabla_{X_1}J)X_2, Z)=-g(\tilde\nabla_{X_1}JX_2, Z)+g(J\tilde\nabla_{X_1}X_2, Z).
\end{align*}
Then it follows from \eqref{metric} and \eqref{tan} that
\begin{align*}
g(h(X_1, X_2), JZ)=-g(\tilde\nabla_{X_1}TX_2, Z)-g(\tilde\nabla_{X_1}FX_2, Z)-g(\tilde\nabla_{X_1}X_2, FZ).
\end{align*}
Again, using \eqref{warped}, \eqref{gauss}-\eqref{wen1} and the orthogonality of vector fields, we obtain
\begin{align}\label{3.6}
g(h(X_1, X_2), JZ)=\frac{1}{2}\,g(h(X_1, Z), FX_2).
\end{align}
Hence, the second equality of (i) follows from \eqref{3.5} and \eqref{3.6}. For, the second part of the lemma, we have
\begin{align*}
g(h(&X_1, X_2), FY_2)=g(\tilde\nabla_{X_2}X_1, JY_2-TY_2)\\
&=g((\tilde\nabla_{X_2}J)X_1, Y_2)-g(\tilde\nabla_{X_2}JX_1, Y_2)-g(\tilde\nabla_{X_2}X_1, TY_2)\\
&=-g((\tilde\nabla_{X_1}J)X_2, Y_2)-JX_1(\ln f_2)g(X_2, Y_2)-X_1(\ln f_2)g(X_2, TY_2)\\
&=-g(\tilde\nabla_{X_1}TX_2, Y_2)-g(\tilde\nabla_{X_1}FX_2, Y_2)-g(\tilde\nabla_{X_1}X_2, TY_2)\\
&-g(\tilde\nabla_{X_1}X_2, FY_2)-JX_1(\ln f_2)g(X_2, Y_2)-X_1(\ln f_2)g(X_2, TY_2).
\end{align*}
Using \eqref{gauss}-\eqref{wen1} and \eqref{warped}, we find
\begin{equation}\begin{aligned}\label{3.7}
2g(h(X_1, X_2), FY_2)=\;&g(h(X_1, Y_2), FX_2)-X_1(\ln f_2) g(X_2, TY_2)
\\&-JX_1(\ln f_2) g(X_2, Y_2).
\end{aligned}\end{equation}
Interchanging $X_2$ by $Y_2$ in \eqref{3.7}, we get
\begin{equation}\begin{aligned}\label{3.8}
2g(h(X_1, Y_2), FX_2)=\;&g(h(X_1, X_2), FY_2)+X_1(\ln f_2) g(X_2, TY_2)
\\
&-JX_1(\ln f_2) g(X_2, Y_2).
\end{aligned}\end{equation}
The second part follows from \eqref{3.7} and \eqref{3.8}. Hence the proof is complete.
\end{proof}

The following relations are easily obtained by interchanging $X_1$ by $JX_1$ and $X_2$  and $Y_2$ by $TX_2$ and $TY_2$, respectively.
\begin{align}\label{3.9}
g(h(X_1, X_2), FTY_2)=\frac{1}{3}X_1(\ln f_2)\cos^2\theta\,g(X_2, Y_2)-JX_1(\ln f_2)g(X_2, TY_2),
\end{align}
\begin{align}\label{3.10}
g(h(JX_1, X_2), FTY_2)&=\frac{1}{3}JX_1(\ln f_2)\cos^2\theta\,g(X_2, Y_2)\notag\\
&+X_1(\ln f_2)g(X_2, TY_2),
\end{align}
\begin{align}\label{3.11}
g(h(X_1, TX_2), FY_2)&=-\frac{1}{3}X_1(\ln f_2)\cos^2\theta\,g(X_2, Y_2)\notag\\
&-JX_1(\ln f_2)g(TX_2, Y_2),
\end{align}
\begin{align}\label{3.12}
g(h(JX_1, TX_2), FY_2)&=-\frac{1}{3}JX_1(\ln f_2)\cos^2\theta\,g(X_2, Y_2)\notag\\
&+X_1(\ln f_2)g(TX_2, Y_2),
\end{align}
\begin{align}\label{3.13}
g(h(X_1, TX_2), FTY_2)&=-\frac{1}{3}X_1(\ln f_2)\cos^2\theta\,g(X_2, TY_2)\notag\\
&-JX_1(\ln f_2)\cos^2\theta g(X_2, Y_2).
\end{align}
 
 From Lemma \ref{L1}(iii) we obtain immediately the following.
 
  \begin{theorem}\label{T1}
Let $M=M_T\times_{f_1}\! M_\perp\times_{f_2}\! M_\theta$ be a bi-warped product submanifold of a nearly Kaehler manifold $\tilde M$. If $M$ is ${\mathfrak{D}}\oplus{\mathfrak{D}}^\perp$--mixed totally geodesic, then $f_1$ is constant, and hence $M$ is an ordianary warped product manifold.
\end{theorem}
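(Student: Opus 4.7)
The plan is to read off the conclusion directly from Lemma~\ref{L1}(iii), which states
\[
g(h(X_1,Z),JW)=-JX_1(\ln f_1)\,g(Z,W).
\]
Under the hypothesis that $M$ is $\mathfrak{D}\oplus\mathfrak{D}^\perp$--mixed totally geodesic, the left-hand side vanishes for every $X_1\in\Gamma(\mathfrak{D})$ and all $Z,W\in\Gamma(\mathfrak{D}^\perp)$, because $h(X_1,Z)=0$ by definition of mixed total geodesicity.

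Specializing to $W=Z$ with $Z$ a nonzero section of $\mathfrak{D}^\perp$, so that $g(Z,Z)>0$ pointwise where $Z\neq 0$, I conclude $JX_1(\ln f_1)=0$ for every $X_1\in\Gamma(\mathfrak{D})$. Since $M_T$ is holomorphic, the distribution $\mathfrak{D}=TM_T$ is $J$-invariant; hence as $X_1$ ranges over $\Gamma(\mathfrak{D})$, the vector field $JX_1$ also ranges over all of $\Gamma(\mathfrak{D})$. Therefore $Y(\ln f_1)=0$ for every $Y\in\Gamma(\mathfrak{D})=TM_T$, which forces $\ln f_1$, and therefore the positive function $f_1$ itself, to be constant on $M_T$.

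Once $f_1\equiv c>0$ is constant, the bi-warped metric becomes
\[
g=g_{M_T}+c^2\, g_{M_\perp}+(f_2\circ\pi_2)^2\, g_{M_\theta},
\]
and rescaling the metric on $M_\perp$ by $c$ exhibits this as the (ordinary) warped product metric on $(M_T\times M_\perp)\times_{f_2}M_\theta$, with warping function $f_2$ only. Thus $M$ reduces to an ordinary warped product, as claimed. The proof is essentially a one-line corollary of Lemma~\ref{L1}(iii); the only point that needs care is the passage from $JX_1(\ln f_1)=0$ to $Y(\ln f_1)=0$ for all $Y\in\Gamma(\mathfrak{D})$, which is where the holomorphy of $M_T$ (i.e.\ $J\mathfrak{D}=\mathfrak{D}$) is essential. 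There is no substantive obstacle.
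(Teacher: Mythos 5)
Your proof is correct and follows exactly the route the paper intends: the paper derives Theorem~\ref{T1} ``immediately'' from Lemma~\ref{L1}(iii) by the same specialization you make, namely that mixed total geodesicity kills the left-hand side, forcing $JX_1(\ln f_1)=0$ and hence, by $J$-invariance of $\mathfrak{D}$, the constancy of $f_1$. Your explicit attention to the step $JX_1(\ln f_1)=0 \Rightarrow Y(\ln f_1)=0$ for all $Y\in\Gamma(\mathfrak{D})$ is a welcome elaboration of a detail the paper leaves implicit, but it is the same argument.
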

  
Similarly, from Lemma \ref{L2} (ii), we may obtain the following.

\begin{theorem}\label{T2} Let $M=M_T\times_{f_1}\! M_\perp\times_{f_2}\! M_\theta$ be a proper bi-warped product submanifold of a nearly Kaehler manifold $\tilde M$. If $M$ is  ${\mathfrak{D}}\oplus {\mathfrak{D}}^\theta$--mixed totally geodesic, then $f_2$ is constant on $M$. \end{theorem}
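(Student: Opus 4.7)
The plan is to apply Lemma \ref{L2}(ii) directly, exploiting that the $\mathfrak{D}\oplus\mathfrak{D}^\theta$--mixed totally geodesic hypothesis annihilates the left-hand side of that identity. Since $h(X_1,X_2)=0$ for every $X_1\in\Gamma(\mathfrak{D})$ and $X_2\in\Gamma(\mathfrak{D}^\theta)$, Lemma \ref{L2}(ii) collapses to
\begin{equation*}
\tfrac{1}{3}X_1(\ln f_2)\,g(TX_2,Y_2)-JX_1(\ln f_2)\,g(X_2,Y_2)=0
\end{equation*}
for all $X_1\in\Gamma(\mathfrak{D})$ and $X_2,Y_2\in\Gamma(\mathfrak{D}^\theta)$. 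The strategy is then to make a judicious choice of $Y_2$ that isolates each of the two coefficients $X_1(\ln f_2)$ and $JX_1(\ln f_2)$.

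First I would take $Y_2=X_2$. The operator $T$ is skew-symmetric on the tangent bundle (an immediate consequence of $g(JX,JY)=g(X,Y)$ from \eqref{metric} together with $g(FX,Y)=0$ for tangent $Y$), so $g(TX_2,X_2)=0$, while $g(X_2,X_2)=\|X_2\|^2$. The displayed identity reduces to $JX_1(\ln f_2)\|X_2\|^2=0$, which yields $JX_1(\ln f_2)=0$ for every $X_1\in\Gamma(\mathfrak{D})$ and every nonzero $X_2\in\Gamma(\mathfrak{D}^\theta)$. Since $\mathfrak{D}=TM_T$ is $J$-invariant and $J^2=-I$ on $TM_T$, replacing $X_1$ by $JX_1$ yields $X_1(\ln f_2)=0$ on all of $\mathfrak{D}$ as well. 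As an alternative route, the substitution $Y_2=TX_2$ combined with \eqref{slant1} and the fact $\cos^2\theta\neq 0$ (from the proper slantness of $M_\theta$) produces $X_1(\ln f_2)=0$ directly; either approach suffices.

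Because $f_2$ is by construction a function on the base $M_T$ and $\mathfrak{D}=TM_T$, the condition $X_1(\ln f_2)=0$ for every $X_1\in\Gamma(TM_T)$ forces $\ln f_2$, and hence $f_2$, to be constant on $M$. There is no substantial obstacle in this argument; the only mildly non-trivial ingredient is the skew-symmetry of $T$, which is standard from the almost Hermitian compatibility \eqref{metric}, together with the non-degeneracy input from proper slantness if one prefers the $Y_2=TX_2$ route. It is worth noting that, combined with the properness hypothesis built into the statement, the theorem in effect asserts the non-existence of proper bi-warped products of the given type which are $\mathfrak{D}\oplus\mathfrak{D}^\theta$--mixed totally geodesic.
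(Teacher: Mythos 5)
Your proof is correct, and it reaches the conclusion by a more direct mechanism than the paper, although both arguments hinge on the same key identity, Lemma \ref{L2}(ii). The paper does not impose the hypothesis right away: it first combines Lemma \ref{L2}(ii) with the derived relation \eqref{3.10} (which is Lemma \ref{L2}(ii) with $X_1\to JX_1$, $Y_2\to TY_2$) to eliminate the $X_1(\ln f_2)$ terms via skew-symmetry of $T$, obtaining the standalone identity \eqref{3.14}
\begin{equation*}
\left(\cos^2\theta-9\right)JX_1(\ln f_2)\,g(X_2, Y_2)= 9g(h(X_1, X_2), FY_2)+3g(h(JX_1, X_2), FTY_2),
\end{equation*}
and only then kills the right-hand side with the mixed totally geodesic assumption, concluding via $\cos^2\theta\neq 9$. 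You instead set $g(h(X_1,X_2),FY_2)=0$ in Lemma \ref{L2}(ii) from the start and decouple the two gradient terms by specializing $Y_2$: the choice $Y_2=X_2$ kills the $T$-term (skew-symmetry of $T$, which you justify correctly from \eqref{metric}) and gives $JX_1(\ln f_2)=0$, after which $J$-invariance of $\mathfrak{D}$ finishes the job; your alternative choice $Y_2=TX_2$ with \eqref{slant1} and $\cos\theta\neq 0$ gives $X_1(\ln f_2)=0$ directly. What the paper's route buys is the identity \eqref{3.14} itself, valid without any hypothesis on $h$, which quantifies the failure of mixed total geodesy in the same style as the relations \eqref{3.9}--\eqref{3.13} used later for the inequality; what your route buys is brevity and the avoidance of both the auxiliary relation \eqref{3.10} and the slightly awkward ``$\cos\theta=\pm 3$ is impossible'' step. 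Your closing observation that, under the properness hypothesis, the theorem amounts to a non-existence statement is exactly the content of Remark \ref{R1}.
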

 \begin{proof} From Lemma \ref{L2} (ii) and \eqref{3.10}, we have
 \begin{equation}\begin{aligned}\label{3.14}&
\left(\cos^2\theta-9\right)JX_1(\ln f_2)\,g(X_2, Y_2)
\\&\hskip.3in = 9g(h(X_1, X_2), FY_2)+3g(h(JX_1, X_2), FTY_2).
\end{aligned}\end{equation}
If $M$ is ${\mathfrak{D}}\oplus {\mathfrak{D}}^\theta$--mixed totally geodesic, then we find from \eqref{3.14} that
 \begin{align*}
(\cos^2\theta-9)JX_1(\ln f_2)=0,
\end{align*}
which implies that either $\cos\theta=\pm3$, which is not possible or $JX_1(\ln f_2)=0$, i.e., $f_2$ is constant. This completes the proof.
 \end{proof}
 
 \begin{remark}\label{R1} Theorems \ref{T1} and \ref{T2} imply that a proper bi-warped product submanifold $M=M_T\times_{f_1}\! M_\perp\times_{f_2}\! M_\theta$ in a nearly Kaehler manifold is neither ${\mathfrak{D}}\oplus {\mathfrak{D}}^\perp$--mixed totally geodesic nor ${\mathfrak{D}}\oplus{\mathfrak{D}}^\theta$--mixed totally geodesic.
 \end{remark}

%%%%%%%%%%%%%%%%%%%%%%%%%%%%%%%%%%%%%%%%%%%%%%%%%%%%%%%%%%%%%%%%%%%%%%%%%%%%%%%%%%%%%%%%%%%%%%%%%%%%%%%%%%%%%%%%%%%%%%%%%%%%%%%%%%%%%%%%%%%%%%%%%%%%%%%%%%%%%%

\section{Inequality for the second fundamental form}

Let $M=M_T\times_{f_1}\! M_\perp\times_{f_2}\!M_\theta$ be an $n$-dimensional proper bi-warped product submanifold of a nearly Kaehler manifold $\tilde M^{2m}$. We consider a local orthonormal frame field $\{e_1,\ldots,e_{n}\}$ of $TM$ such that
 \begin{align*}
&{\mathfrak{D}}={\rm Span}\{e_1,\cdots,\,e_t,\,e_{t+1}=Je_1,\cdots,e_{2t}=Je_t\},\\
&{\mathfrak{D}}^\perp={\rm Span}\{e_{2t+1}=\hat e_1,\cdots,e_{2t+p}=\hat e_p\},\\
&{\mathfrak{D}}^\theta={ \rm Span}\{e_{2t+p+1}=e^*_1,\cdots,e_{2t+p+q}=e^*_t,
\\&\hskip.8in  e_{2t+p+q+1}=\sec\theta e^*_1,\cdots,e_{n}=\sec\theta e^*_q\}.
\end{align*}
Then $\dim M_T=2t,\,\,\dim M_\perp=p$ and $\dim M_\theta=2q$. Moreover, the orthonormal frame fields $E_1,\ldots, E_{2m-n-p-2q}$ of the normal subbundle $T^\perp M$ are given by
 \begin{align*}
&J{\mathfrak{D}}^\perp={\rm Span}\{E_1=J\hat e_1,\cdots,E_p=J\hat e_p\},\\
&F{\mathfrak{D}}^\theta={ \rm Span}\{E_{p+1}=\csc\theta Fe^*_1,\cdots,E_{p+q}=\csc\theta Fe^*_p,\,
\\&\hskip.7in  E_{p+q+1}=\csc\theta\sec\theta FTe^*_1,\cdots,E_{p+2q}=\csc\theta\sec\theta FTe^*_q\},\\
&\mu={\rm Span}\{E_{p+2q+1},\cdots,\,E_{2m-n-p-2q}\}.
\end{align*}

The main result of this article is the following sharp inequality for  bi-warped product submanifolds in a nearly Kaehler manifold.

\begin{theorem}\label{T3}
Let $M=M_T\times_{f_1}\! M_\perp\times_{f_2}\! M_\theta$ be a bi-warped product submanifold of a nearly Kaehler manifold $\tilde M$, where $M_T,\,M_\perp$ and $M_\theta$ are holomorphic, totally real and proper slant submanifolds of $\tilde M$, respectively. Then we have:
  \begin{enumerate}
 \item[(i)] The second fundamental form $h$ and the warping functions $f_1,\,f_2$ satisfy
 \begin{align}\label{main}
 \|h\|^2\geq 2p\|\nabla(\ln f_1)\|^2+4q\left(1+\frac{10}{9}\cot^2\theta\right)\|\nabla(\ln f_2)\|^2
 \end{align}
 where $p=\dim M_\perp,\,q=\frac{1}{2}\dim M_\theta$ and $\nabla(\ln f_i)$ is the gradient of $\ln f_i$.
 
\item[(ii)] If the equality sign in \eqref{main} holds identically, then $M_T$ is totally geodesic in $\tilde M$, and $M_\perp, M_\theta$ are totally umbilical in $\tilde M$. Moreover, $M$ is neither ${\mathfrak{D}}\oplus{\mathfrak{D}}^\perp$--mixed totally geodesic nor ${\mathfrak{D}}\oplus{\mathfrak{D}}^\theta$--mixed totally geodesic in $\tilde M$.
\end{enumerate}
\end{theorem}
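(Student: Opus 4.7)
The plan is to expand
\begin{equation*}
\|h\|^2 = \sum_{r=n+1}^{2m}\sum_{i,j=1}^n(h_{ij}^r)^2
\end{equation*}
in the adapted orthonormal frame fixed just before the theorem, and then bound it from below by keeping only the summands that we can compute via Lemmas \ref{L1}, \ref{L2} and identities \eqref{3.9}--\eqref{3.13}, discarding the remaining nonnegative summands. I would split the normal sum into the three blocks $J\mathfrak{D}^\perp$, $F\mathfrak{D}^\theta$, $\mu$, and split each $\sum_{i,j}$ into the six sectors determined by $TM=\mathfrak{D}\oplus\mathfrak{D}^\perp\oplus\mathfrak{D}^\theta$, carrying a factor of $2$ on the off-diagonal sectors. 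The $\mu$-block is dropped wholesale; in the $J\mathfrak{D}^\perp$-block the only surviving sector is $(\mathfrak{D},\mathfrak{D}^\perp)$; in the $F\mathfrak{D}^\theta$-block it is $(\mathfrak{D},\mathfrak{D}^\theta)$; everything else either vanishes by Lemmas \ref{L1}, \ref{L2} or is dropped.

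For the first surviving block, Lemma \ref{L1}(iii) gives $g(h(e_\alpha,\hat e_b), J\hat e_k)=-Je_\alpha(\ln f_1)\delta_{bk}$, and summing over $\alpha$, $b$, and $k=1,\dots,p$, together with the fact that $\{Je_\alpha\}$ is again an orthonormal frame of $\mathfrak{D}$, produces the term $2p\|\nabla(\ln f_1)\|^2$. For the second block I would apply Lemma \ref{L2}(ii) and \eqref{3.9}--\eqref{3.13} to each pairing of the tangent-frame vectors $e_j^*$ and $\bar e_j^*=\sec\theta\,Te_j^*$ of $\mathfrak{D}^\theta$ against the normal-frame vectors $\csc\theta\,Fe_i^*$ and $\csc\theta\sec\theta\,FTe_i^*$ of $F\mathfrak{D}^\theta$. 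Using $Te_j^*=\cos\theta\,\bar e_j^*$, $T^2=-\cos^2\theta\,I$, and the orthonormality relations, each of the $2q$ normal directions contributes
\begin{equation*}
\sum_{\alpha}\sum_{c}(h_{\alpha c}^{r})^2 = \csc^2\theta\left(1+\tfrac{1}{9}\cos^2\theta\right)\|\nabla(\ln f_2)\|^2,
\end{equation*}
and summing over the $2q$ values of $r$ with the symmetry factor of $2$ and using $\csc^2\theta+\tfrac{1}{9}\cot^2\theta=1+\tfrac{10}{9}\cot^2\theta$ yields the second term. Dropping the remaining nonnegative pieces gives \eqref{main}. The main obstacle lies in this $F\mathfrak{D}^\theta$-block: four distinct tangent-normal pairings each call for a different one of \eqref{3.9}--\eqref{3.13}, and the $\tfrac{1}{3}$ factors, signs, and $\sec/\csc$ normalizations must all line up to collapse to exactly $1+\tfrac{10}{9}\cot^2\theta$.

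For the equality case, \eqref{main} is sharp precisely when every discarded summand vanishes. After incorporating the zeros already supplied by Lemma \ref{L1}(i)--(ii) and Lemma \ref{L2}(i), this reduces to $h(\mathfrak{D},\mathfrak{D})=0$ (only the $\mu$-component was not already zero), $h(\mathfrak{D}^\perp,\mathfrak{D}^\perp)=0$, $h(\mathfrak{D}^\theta,\mathfrak{D}^\theta)=0$, and $h(\mathfrak{D}^\perp,\mathfrak{D}^\theta)=0$. Combining these with the bi-warped connection formula \eqref{warped}, the second fundamental forms of $M_T$, $M_\perp$, $M_\theta$ inside $\tilde M$ compute to $h^{\tilde M}_{M_T}=0$, $h^{\tilde M}_{M_\perp}(Z,W)=-g(Z,W)\nabla(\ln f_1)$, and $h^{\tilde M}_{M_\theta}(X_2,Y_2)=-g(X_2,Y_2)\nabla(\ln f_2)$, so $M_T$ is totally geodesic and $M_\perp$, $M_\theta$ are totally umbilical in $\tilde M$. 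The last clause of (ii) is an immediate consequence of Remark \ref{R1}.
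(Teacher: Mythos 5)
Your proposal is correct and takes essentially the same route as the paper's proof: the same adapted frame, the same decomposition of $\|h\|^2$ into the $J{\mathfrak{D}}^\perp$, $F{\mathfrak{D}}^\theta$ and $\mu$ blocks with Lemmas \ref{L1}, \ref{L2} and \eqref{3.9}--\eqref{3.13} evaluating the two surviving mixed sectors, and the same equality analysis via vanishing of the discarded terms together with the warped-product formula \eqref{warped} and Remark \ref{R1}. Your per-normal-direction bookkeeping (each of the $2q$ directions contributing $\csc^2\theta\bigl(1+\tfrac{1}{9}\cos^2\theta\bigr)\|\nabla(\ln f_2)\|^2$) and the explicit umbilicity formulas are only cosmetic repackagings of the paper's computation.
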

\begin{proof} From the definition of $h$, we have
\begin{align*}
\|h\|^2=\sum_{i,j=1}^ng(h(e_i, e_j), h(e_i, e_j)) =\sum_{r=1}^{2m-n-p-2q}\sum_{i,j=1}^ng^2(h(e_i, e_j), E_r).\end{align*}
Then we decompose the above relation for the normal subbundles as follows
\begin{equation}\begin{aligned}
 \label{t11} \|h\|^2=\, &\sum_{r=1}^{p}\sum_{i,j=1}^ng^2(h(e_i, e_j), J\hat e_r)+\sum_{r=p+1}^{p+2q}\sum_{i,j=1}^ng^2(h(e_i, e_j), E_r)
\\&+\sum_{r=p+2q+1}^{m-n-p-2q}\sum_{i,j=1}^ng^2(h(e_i, e_j), E_r).
\end{aligned}\end{equation}
Leaving the last $\mu$-components term in \eqref{t11} and using the frame fields of tangent and normal subbundles of $M$, we derive
\begin{align}\notag
\|h\|^2\geq\, &\sum_{r=1}^{p}\sum_{i,j=1}^{2t}g^2(h(e_i, e_j), J\hat e_r)+2\sum_{r=1}^{p}\sum_{i=1}^{2t}\sum_{j=1}^{p}g^2(h(e_i, \hat e_j), J\hat e_r)\notag\\
&+\sum_{r=1}^{p}\sum_{i,j=1}^{p}g^2(h(\hat e_i, \hat e_j), J\hat e_r)+2\sum_{r=1}^{p}\sum_{i=1}^{2t}\sum_{j=1}^{2q}g^2(h(e_i, e^*_j), J\hat e_r)\notag\\
&+\sum_{r=1}^{p}\sum_{i,j=1}^{2q}g^2(h(e^*_i, e^*_j), J\hat e_r)+2\sum_{r=1}^{p}\sum_{i=1}^{2q}\sum_{j=1}^{p}g^2(h(e^*_i, \hat e_j), J\hat e_r)\notag\\
&+\csc^2\theta\sum_{r=1}^{q}\sum_{i,j=1}^{2t}\left[g^2(h(e_i, e_j), Fe^*_r)+\sec^2\theta\,g^2(h(e_i, e_j), FTe^*_r)\right]\notag\\
&\label{t12} +2\csc^2\theta\sum_{r=1}^{q}\sum_{i=1}^{2t}\sum_{j=1}^{p}\left[g^2(h(e_i, \hat e_j), Fe^*_r)+\sec^2\theta g^2(h(e_i, \hat e_j), FTe^*_r)\right]\\
&+\csc^2\theta\sum_{r=1}^{q}\sum_{i,j=1}^{p}\left[g^2(h(\hat e_i, \hat e_j), Fe^*_r)+\sec^2\theta\,g^2(h(\hat e_i, \hat e_j), FTe^*_r)\right]\notag\\
&+2\csc^2\theta\sum_{r=1}^{q}\sum_{i=1}^{p}\sum_{j=1}^{2q}\left[g^2(h(\hat e_i, e^*_j), Fe^*_r)+\sec^2\theta\,g^2(h(\hat e_i,  e^*_j), FTe^*_r)\right]\notag\\
&+\csc^2\theta\sum_{r=1}^{q}\sum_{i,j=1}^{2q}\left[g^2(h(e^*_i, e^*_j), Fe^*_r)+\sec^2\theta\,g^2(h(e^*_i, e^*_j), FTe^*_r)\right]\notag\\
&\notag +2\csc^2\theta\sum_{r=1}^{q}\sum_{i=1}^{2t}\sum_{j=1}^{2q}\left[g^2(h(e_i, e^*_j), Fe^*_r)+\sec^2\theta\,g^2(h(e_i,  e^*_j), FTe^*_r)\right].
\end{align}
We have no relation for warped products for the third, fifth, sixth, ninth, tenth and eleventh terms in \eqref{t12}, therefore, we leave these positive terms. Moreover, by using Lemma \ref{L1} and Lemma \ref{L2} with the relations \eqref{3.9}-\eqref{3.13}, we find that
\begin{align*}
\|h\|^2\geq\, & 2p\sum_{i=1}^{t}\left[\left(-Je_i(\ln f_1)\right)^2+\left(e_i(\ln f_1)\right)^2\right]\\&+4q\csc^2\theta\sum_{i=1}^{t}\left[\left(-Je_i(\ln f_2)\right)^2+\left(e_i(\ln f_2)\right)^2\right]\\
&+\frac{4q}{9}\cot^2\theta\sum_{i=1}^{t}\left[\left(-Je_i(\ln f_2)\right)^2+\left(e_i(\ln f_2)\right)^2\right]\\
=\,&2p\sum_{i=1}^{2t}\left(e_i(\ln f_1)\right)^2+4q\left(\csc^2\theta+\frac{1}{9}\cot^2\theta\right)\sum_{i=1}^{2t}\left(e_i(\ln f_2)\right)^2.
\end{align*}
Then we find the required inequality from the definition of gradient. 
 \vskip.1in
 
For the equality case, we have from the leaving third term in \eqref{t11} that
\begin{align}\label{eq1}
h(TM, TM)\perp\mu
\end{align}
From the vanishing first term and leaving seventh term in \eqref{t12}, we find
\begin{align}\label{eq2}
h({\mathfrak{D}}, {\mathfrak{D}})\perp J{\mathfrak{D}}^\perp\,\,\,{\mbox{and}}\,\,\,h({\mathfrak{D}}, {\mathfrak{D}})\perp F{\mathfrak{D}}^\theta.
\end{align}
Then we find from \eqref{eq1} and \eqref{eq2} that
\begin{align}\label{eq3} h({\mathfrak{D}}, {\mathfrak{D}})=0.\end{align}

On the other hand, from the leaving third and ninth terms in \eqref{t12}, we get
\begin{align}\label{eq4}
h({\mathfrak{D}}^\perp, {\mathfrak{D}}^\perp)\perp J{\mathfrak{D}}^\perp\,\,\,{\mbox{and}}\,\,h({\mathfrak{D}}^\perp, {\mathfrak{D}}^\perp)\perp F{\mathfrak{D}}^\theta.
\end{align}
Again,  we conclude from  \eqref{eq1} and \eqref{eq4} that
\begin{align}\label{eq5}
h({\mathfrak{D}}^\perp, {\mathfrak{D}}^\perp)=0.
\end{align}
Also, from the leaving fifth and eleventh terms in the right hand side of \eqref{t12}, we have
\begin{align}\label{eq6}
h({\mathfrak{D}}^\theta, {\mathfrak{D}}^\theta)\perp J{\mathfrak{D}}^\perp\,\,\,{\mbox{and}}\,\,h({\mathfrak{D}}^\theta, {\mathfrak{D}}^\theta)\perp F{\mathfrak{D}}^\theta.
\end{align}
Then we obtain from \eqref{eq1} and \eqref{eq6} that
\begin{align}\label{eq7}
h({\mathfrak{D}}^\theta, {\mathfrak{D}}^\theta)=0.
\end{align}
Moreover, from the leaving sixth and tenth terms in \eqref{t12}, we get
\begin{align}\label{eq8}
h({\mathfrak{D}}^\perp, {\mathfrak{D}}^\theta)\perp J{\mathfrak{D}}^\perp\,\,\,{\mbox{and}}\,\,h({\mathfrak{D}}^\perp, {\mathfrak{D}}^\theta)\perp F{\mathfrak{D}}^\theta.
\end{align}
Therefore, from \eqref{eq1} and \eqref{eq8} we obtain
\begin{align}\label{eq9}
h({\mathfrak{D}}^\perp, {\mathfrak{D}}^\theta)=0.
\end{align}

On the other hand, from the vanishing eighth term in \eqref{t12} with\eqref{eq1}, we have
\begin{align}\label{eq10}
h({\mathfrak{D}}, {\mathfrak{D}}^\perp)\subset J{\mathfrak{D}}^\perp.
\end{align}
Similarly, from the vanishing forth term in \eqref{t12} with \eqref{eq1}, we get
\begin{align}\label{eq11}
h({\mathfrak{D}}, {\mathfrak{D}}^\theta)\subset F{\mathfrak{D}}^\theta.
\end{align}
Since $M_T$ is totally geodesic in $\tilde M$ (see, e.g., \cite{Bi,C3}), using this fact together with \eqref{eq3}, \eqref{eq5} and \eqref{eq9}, we know $M_T$ is totally geodesic in $\tilde M$. Also, since $M_\perp$ and $M_\theta$ are totally umbilical in $M$, using this fact together with \eqref{eq5}, \eqref{eq7}, \eqref{eq10} and \eqref{eq11}, we conclude that $M_\perp$ and $M_\theta$ are both totally umbilical in $\tilde M$. Furhter, it follows from Remark \ref{R1}, \eqref{eq10} and \eqref{eq11} that $M$ is neither ${\mathfrak{D}}\oplus{\mathfrak{D}}^\perp$--mixed totally geodesic nor ${\mathfrak{D}}\oplus{\mathfrak{D}}^\theta$--mixed totally geodesic in $\tilde M$. Consequently, the theorem is proved completely.
\end{proof}

\section{Some applications}

Theorem \ref{T3} implies the following.

\begin{theorem}\label{T:5.1} {\rm \cite{C3}} Let $M=M_T\times_f M_\perp$ be a  $CR$-warped product in a
Kaeahler manifold $\tilde M$. Then the second fundamental form $h$ of $M$ satisfies 
 \begin{align}\label{5.1}||h||^2\geq 2p\,||\nabla (\ln f)||^2, \end{align} 
 where $p=\dim M$.
 Moreover, if the equality sign of \eqref{5.1} holds identically, then  $M_T$ is  totally geodesic  and $M_\perp $ is  totally umbilical in $\tilde M$. 
\end{theorem}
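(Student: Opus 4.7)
The plan is to obtain Theorem \ref{T:5.1} as an immediate specialization of Theorem \ref{T3}. The key observation is that a CR-warped product $M = M_T \times_f M_\perp$ in a Kaehler manifold is precisely the degenerate case of a bi-warped product $M_T \times_{f_1}\! M_\perp \times_{f_2}\! M_\theta$ in which the proper slant factor is absent (i.e.\ $\dim M_\theta = 0$, so $q = 0$), and $f_1$ is identified with $f$. Moreover, every Kaehler manifold is nearly Kaehler, since the Tachibana condition $(\tilde\nabla_X J)Y + (\tilde\nabla_Y J)X = 0$ follows trivially from $\tilde\nabla J \equiv 0$. Hence the hypotheses of Theorem \ref{T3} apply.

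Substituting $q = 0$ into inequality \eqref{main} annihilates the slant contribution $4q(1 + \tfrac{10}{9}\cot^2\theta)\|\nabla(\ln f_2)\|^2$, leaving
\[
\|h\|^2 \;\geq\; 2p\,\|\nabla(\ln f)\|^2,
\]
which is exactly \eqref{5.1}. No separate calculation is required here; the substantive bookkeeping from Lemma \ref{L1} and the expansion \eqref{t12} used to prove Theorem \ref{T3} simply collapses, since the normal decomposition $T^\perp M = J{\mathfrak{D}}^\perp \oplus F{\mathfrak{D}}^\theta \oplus \mu$ reduces to $J{\mathfrak{D}}^\perp \oplus \mu$ and all terms indexed by $r \in \{p+1,\dots,p+2q\}$ vanish.

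For the equality case, I would invoke part (ii) of Theorem \ref{T3}. With ${\mathfrak{D}}^\theta$ trivial, the conclusions regarding $M_\theta$ (total umbilicity and non-mixed-geodesy with ${\mathfrak{D}}$) become vacuous. What remains is exactly that $M_T$ is totally geodesic in $\tilde M$ and $M_\perp$ is totally umbilical in $\tilde M$, yielding the desired equality statement.

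There is no genuine obstacle in this argument: all the analytic content has already been absorbed into Theorem \ref{T3}. The only point requiring mild care is to verify that in the Kaehler specialization the factor of $\tfrac{1}{2}$ arising in Lemma \ref{L1}(iii) (which survives the nearly Kaehler symmetrization) gives the correct coefficient $2p$ rather than $p$; this is already built into the proof of Theorem \ref{T3}, so the corollary is a direct reading.
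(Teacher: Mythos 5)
Your proposal is correct and is exactly the paper's own route: the paper gives no separate proof of Theorem \ref{T:5.1}, but presents it precisely as the specialization of Theorem \ref{T3} obtained by dropping the slant factor ($q=0$, $f_1=f$) and observing that Kaehler manifolds are nearly Kaehler. (Your closing remark about a factor of $\tfrac12$ in Lemma \ref{L1}(iii) is a harmless red herring---no such factor appears there---but it does not affect the argument.)
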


A warped submanifold of the form $M=M_T\times_{f} M_\theta$  in a a nearly Kaehler manifold $\tilde M$ is called {\it semi-slant} if $M_T$ is a holomorphic submanifold and $M_\theta$ is a proper slant submanifold in $\tilde M$.

The next result was proved in \cite{KK09}.

\begin{theorem}\label{T:5.2}  Let $M_T\times_f M_\theta$ be a semi-slant warped product of a nearly K\"ahler manifold $\tilde M$. Then the second fundamental form $h$ of $M$ satisfies 
 \begin{align}\label{5.2}||h||^2\geq 4q \csc^2\theta \left\{1+\frac{1}{9}\cos^4\theta \right\}|\nabla (\ln f)|^2.\end{align} 
\end{theorem}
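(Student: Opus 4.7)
The plan is to derive Theorem \ref{T:5.2} as a corollary of Theorem \ref{T3} by regarding a semi-slant warped product $M_T \times_f M_\theta$ as the degenerate case $p = \dim M_\perp = 0$ of a bi-warped product $M_T \times_{f_1} M_\perp \times_{f_2} M_\theta$ (with $f_2 = f$ and the $f_1$-term vanishing). In this degeneration the contribution $2p\|\nabla(\ln f_1)\|^2$ in \eqref{main} drops out, and inspection of the proof of Theorem \ref{T3} shows that the derivation of the coefficient $4q\bigl(1 + \tfrac{10}{9}\cot^2\theta\bigr)$ of $\|\nabla(\ln f_2)\|^2$ used only Lemma \ref{L2}(ii) together with the identities \eqref{3.9}-\eqref{3.13}, none of which involve the totally real factor $M_\perp$. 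Consequently, applying Theorem \ref{T3} in this setting yields
$$\|h\|^2 \;\geq\; 4q\left(1 + \frac{10}{9}\cot^2\theta\right)\|\nabla(\ln f)\|^2.$$

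Next, I would close the gap to \eqref{5.2} by an elementary pointwise comparison. It suffices to verify
$$1 + \frac{10}{9}\cot^2\theta \;\geq\; \csc^2\theta\left(1 + \frac{1}{9}\cos^4\theta\right).$$
Multiplying both sides by $9\sin^2\theta$ rewrites this as $9\sin^2\theta + 10\cos^2\theta \geq 9 + \cos^4\theta$; since $9\sin^2\theta + 10\cos^2\theta = 9 + \cos^2\theta$, the inequality reduces to $\cos^2\theta \geq \cos^4\theta$, i.e.\ $\cos^2\theta\sin^2\theta \geq 0$, which is trivial. Chaining the two inequalities gives \eqref{5.2}, and in fact shows that the bound of Theorem \ref{T3} is uniformly sharper than the Kim--Kim bound.

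The only mild obstacle is the degenerate-case justification for invoking Theorem \ref{T3}, since that theorem is formally stated for \emph{proper} bi-warped products in which all three factors are present. I would address this in one of two ways: either (i) add a short remark that the portion of the proof producing the $4q\bigl(1 + \tfrac{10}{9}\cot^2\theta\bigr)\|\nabla(\ln f_2)\|^2$ term is insensitive to the presence of $M_\perp$, or (ii) reproduce only the relevant lines of that argument — namely the contributions in \eqref{t12} coming from the $Fe_r^*$ and $FTe_r^*$ normal directions, estimated via Lemma \ref{L2}(ii) and \eqref{3.9}-\eqref{3.13}. I would favor option (i) for brevity, as it also emphasizes that Theorem \ref{T3} properly generalizes Theorem \ref{T:5.2}.
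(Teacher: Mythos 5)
Your proposal is mathematically correct, but note that the paper itself never proves Theorem \ref{T:5.2}: it is quoted from \cite{KK09} (Khan--Khan, incidentally, not ``Kim--Kim''), where it was obtained by a direct computation for two-factor semi-slant warped products, without any bi-warped machinery. What you have done is assemble the paper's own surrounding material into a proof of the cited result: the specialization $p=0$ of Theorem \ref{T3} is precisely the paper's Theorem \ref{T:5.3} (which the paper likewise introduces with ``Theorem \ref{T3} implies the following,'' so the authors themselves endorse the degenerate-case reasoning you are cautious about), and your pointwise comparison
$$1+\frac{10}{9}\cot^2\theta\;\geq\;\csc^2\theta\left(1+\frac{1}{9}\cos^4\theta\right),$$
after clearing $9\sin^2\theta$, is verbatim the inequality $9+10\cot^2\theta>\csc^2\theta\,(9+\cos^4\theta)$ stated in the paper's closing Remark to show that Theorem \ref{T:5.3} improves Theorem \ref{T:5.2}. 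Your verification is sound: $9\sin^2\theta+10\cos^2\theta=9+\cos^2\theta$, so the claim reduces to $\cos^2\theta\sin^2\theta\geq 0$, which is strict on $(0,\tfrac{\pi}{2})$ --- hence, as you observe and as the paper's Remark also notes, the bound \eqref{5.2} is never sharp. So the two routes compare as follows: yours buys a self-contained derivation of the Khan--Khan inequality as a strict consequence of the paper's main theorem, which is exactly the logical point the paper makes but does not package as a proof; what it does not provide is an independent direct proof in the style of \cite{KK09}. On the degenerate case, your instinct is right that Theorem \ref{T3} as stated presumes the factor $M_\perp$ is present; your option (ii) --- rerunning only the estimates for the $Fe^*_r$ and $FTe^*_r$ normal components via Lemma \ref{L2}(ii) and \eqref{3.9}--\eqref{3.13}, none of which involve $\mathfrak{D}^\perp$ --- is the rigorous choice, though option (i) matches the level of detail the paper itself uses when deducing Theorems \ref{T:5.1} and \ref{T:5.3}.
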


On the other hand, Theorem \ref{T3} implies the following.

\begin{theorem}\label{T:5.3} {\rm \cite{AKU17}}
Let $M=M_T\times_{f} M_\theta$ be a semi-slant warped product submanifold of a nearly Kaehler manifold $\tilde M$. Then second fundamental form $h$ and the warping function $f$ satisfy
 \begin{align}\label{5.3}
 \|h\|^2\geq 4q\left\{1+\frac{10}{9}\cot^2\theta\right\}\|\nabla(\ln f)\|^2.
 \end{align}
 Moreover,  if the equality sign in \eqref{main} holds identically, then $M_T$ is totally geodesic and  $M_\theta$ are totally umbilical in $\tilde M$. 
 \end{theorem}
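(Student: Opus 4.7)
The plan is to obtain Theorem 5.3 as a direct corollary of Theorem 4.1 by specializing the bi-warped product to an ordinary warped product. Concretely, I would regard $M = M_T \times_f M_\theta$ as the degenerate bi-warped product $M_T \times_{f_1} M_\perp \times_{f_2} M_\theta$ in which $\dim M_\perp = p = 0$ (so the totally real factor collapses to a point and no warping function $f_1$ is present), and identify $f_2$ with $f$. Substituting $p=0$ into inequality (4.1), the term $2p\|\nabla(\ln f_1)\|^2$ disappears and what remains is exactly $4q\bigl(1 + \tfrac{10}{9}\cot^2\theta\bigr)\|\nabla(\ln f)\|^2$, which is inequality (5.3).

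Before taking this substitution for granted, I would verify that the derivation of Theorem 4.1 remains valid when $p=0$. The local orthonormal frame constructed at the start of Section 4 simply omits the vectors $\hat e_1,\dots,\hat e_p$, and every term in the expansion of $\|h\|^2$ that is indexed over $M_\perp$ drops out; the surviving contributions are precisely those that, via Lemma 3.2 together with identities (3.9)--(3.13), yield the $\mathfrak{D}^\theta$-block estimate $4q\bigl(1 + \tfrac{10}{9}\cot^2\theta\bigr)\|\nabla(\ln f)\|^2$. No step of the proof requires $p\geq 1$.

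For the equality discussion, the argument used in the proof of Theorem 4.1 applies verbatim with the $M_\perp$-related conclusions omitted. The non-negative $\mu$-contribution, dropped in the estimate, must now vanish, giving $h(TM,TM)\perp\mu$. The vanishing $\mathfrak{D}$-$\mathfrak{D}$ and $\mathfrak{D}^\theta$-$\mathfrak{D}^\theta$ diagonal terms in the estimate force $h(\mathfrak{D},\mathfrak{D})\perp F\mathfrak{D}^\theta$ and $h(\mathfrak{D}^\theta,\mathfrak{D}^\theta)\perp F\mathfrak{D}^\theta$; combined with the $\mu$-orthogonality this yields $h(\mathfrak{D},\mathfrak{D}) = 0$ and $h(\mathfrak{D}^\theta,\mathfrak{D}^\theta) = 0$, together with $h(\mathfrak{D},\mathfrak{D}^\theta) \subset F\mathfrak{D}^\theta$. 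Using the standard fact that in the warped product $M_T\times_f M_\theta$ the base $M_T$ is totally geodesic in $M$ and the fiber $M_\theta$ is totally umbilical in $M$, the Gauss formula upgrades these properties to statements in $\tilde M$: $M_T$ is totally geodesic in $\tilde M$ and $M_\theta$ is totally umbilical in $\tilde M$.

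The only possible obstacle is a bookkeeping one, namely to confirm that Theorem 4.1 does not implicitly require $p\geq 1$ at any point (for instance in the construction of the frame or in the rearrangement of the estimate expanding $\|h\|^2$). A line-by-line inspection shows it does not, so Theorem 5.3 follows with no further calculation.
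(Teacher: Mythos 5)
Your proposal is correct and is exactly the paper's route: the paper obtains Theorem 5.3 simply by specializing Theorem 4.1 to the degenerate case $p=\dim M_\perp=0$ (stated there as ``Theorem \ref{T3} implies the following''), which kills the $2p\|\nabla(\ln f_1)\|^2$ term and leaves inequality \eqref{5.3}. Your additional check that the frame construction, the estimate of $\|h\|^2$, and the equality analysis all survive when the totally real factor is absent is sound and just makes explicit what the paper leaves implicit.
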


\begin{remark} Theorem \ref{T:5.3} improves Theorem \ref{T:5.2} since $$9+10\cot^2\theta > \csc^2\theta (9+\cos^4\theta)$$ holds for every $\theta\in (0,\frac{\pi}{2})$. Furthermore, Theorem \ref{T:5.3} shows that inequality \eqref{5.2} in Theorem \ref{T:5.2} is not sharp.
\end{remark}

\vskip.15in
\noindent {\it Acknowledgements.}
This project was funded by the Deanship of Scientific Research (DSR), King Abdulaziz University, Jeddah, Saudi Arabia under grant no. (KEP-PhD-33-130-38). Therefore, the authors acknowledge their thanks to the DSR technical and financial support.


\begin{thebibliography}{99}

\bibitem{AKU17} F. R. Al-Solamy, V. A. Khan and S. Uddin, {\it Geometry of warped product semi-slant submanifolds of nearly Kaehler manifolds},  Results Math. {\bf 71} (2017), 783--799. 

\bibitem{Bi69} R. L. Bishop and  B. O'Neill, \textit {Manifolds of Negative curvature,} Trans. Amer. Math. Soc. {\bf{145}} (1969), 1-49.

\bibitem{C1} B.-Y. Chen, \textit {Slant immersions,} Bull. Austral. Math. Soc. {\bf{41}} (1990), 135-147.

\bibitem{C2} B.-Y. Chen, \textit{Geometry of slant submanifolds}, Katholieke Universiteit Leuven, Belgium, 1990.

\bibitem{C3} B.-Y. Chen, {\it Geometry of warped product CR-submanifolds in Kaehler manifolds}, Monatsh. Math.  {\bf{133}} (2001), 177--195.

\bibitem{C4} B.-Y. Chen, {\it Geometry of warped product CR-submanifolds in Kaehler manifolds II}, Monatsh. Math.  {\bf{134}} (2001), 103--119.

\bibitem{C5} B.-Y. Chen, {\it On isometric minimal immersions  from warped products into real space forms,} {\it Proc. Edinburgh Math. Soc}. {\bf 45} (2002), 579--587.

\bibitem{book11} B.-Y. Chen, {\it Pseudo-Riemannian geometry, $\delta$-invariants and applications},  World Scientific, Hackensack, NJ, 2011.

\bibitem{C6} B.-Y. Chen, {\it Geometry of warped product submanifolds: a survey}. J. Adv. Math. Stud.  {\bf{6}} (2013), no. 2, 1--43.

\bibitem{book17} B.-Y. Chen, {\it Differential geometry of warped product manifolds and submanifolds},  World Scientific, Hackensack, NJ, 2017.

\bibitem{C7} B.-Y. Chen,  \textit{Geometry of warped product and CR-warped product submanifolds in Kaehler manifolds: Modified version}, arXiv:1806.11102v1 [math.DG] (2018).

\bibitem{CD} B.-Y. Chen and F. Dillen \textit{Optimal inequalities for multiply warped product submanifolds}, Int. Electron. J. Geom. {\bf{1}} (2008), no. 1, 1-11.

\bibitem{CU} B.-Y. Chen and S. Uddin, \textit{Warped product pointwise bi-slant submanifolds of Kaehler manifolds}, Publ. Math. Debrecen {\bf{92}} (2018), 183--199.

\bibitem{FG85} T. Friedrich and R. Grunewald,{\it On the first eigenvalue of the Dirac operator on 6-dimensional manifolds},  Ann. Global Anal. Geom. {\bf 3} (1985),  265--273.

\bibitem{Gr70} A. Gray,  \textit{Nearly K\"ahler manifolds}, J. Differential Geom. {\bf 4} (1970), 283--309.

\bibitem{KK09} V. A. Khan and K. A. Khan, {\it Generic warped product submanifolds in nearly Kaehler manifolds}, Beitr\"age Algebra Geom. {\bf 50} (2009), no. 2, 337--352. 

\bibitem{sahin06} B. Sahin, {\it Nonexistence of warped product semi-slant submanifolds of Kaehler manifolds}, {\it Geom. Dedicata} {\bf 117} (2006), 195--202.

\bibitem{sahin13}  B. Sahin, {\it Warped product pointwise semi-slant submanifolds of K\"ahler manifolds}, {\it Port. Math.} {\bf 70} (2013), 251--268.


\bibitem{Tach59} S. Tachibana, {\it On almost-analytic vectors in certain almost-Hermitian manifolds}, {\it Tohoku Math. J.} {\bf11} (1959),  351--363. 

\bibitem {Tas17} H. M. Tastan,  \textit{Biwarped product submanifolds of a K\"ahler manifold}, arXiv:1611.08469 [math.DG] (2017).

\bibitem{SC} S. Uddin, B.-Y. Chen and F. R. Al-Solamy, {\it{Warped product bi-slant immersions in Kaehler manifolds}}, Mediterr. J. Math. {\bf 14} (2017), no. 2,  Art. 95, 11 pp.

\bibitem{U6} S. Uddin, F. R. Al-Solamy, M. H. Shahid and A. Saloom, {\textit{B.-Y. Chen's inequality for bi-warped products and its applications in Kenmotsu manifolds}}, Mediterr. J. Math. {\bf{15}} (2018), no. 5, Art.193, 15 pp.

\bibitem{Unal} B. Unal,  \textit{Multiply warped products}, J. Geom. Phys. {\bf{34}} (2000), no. 3, 287-301.

\bibitem{WG68} J. Wolf and A. Gray, {\it Homogeneous spaces defined by Lie group automorphisms, II}, {J. Differential Geometry}, {\bf  2} (1968), 115--159. 

\end{thebibliography}
\end{document}